\documentclass[reqno]{amsart}

\usepackage{amsmath}
\usepackage{amssymb}
\usepackage{amsthm} 
\usepackage{graphicx}
\usepackage{caption}
\usepackage{cite}
\usepackage{enumerate}
\usepackage{mathtools}

\newtheorem{Thm}{Theorem}

\newtheorem{Lemma}[Thm]{Lemma}
\newtheorem{Cor}[Thm]{Corollary}

\theoremstyle{definition}

\def\bkappa{\boldsymbol\kappa}
\def\btau{\boldsymbol{\tau}}

\def\bw{\mathbf{w}}

\def\intst{\int_{\St}}

\def\nnu{\boldsymbol{\nu}}
\def\pa{\partial}
\def\R{\mathbb{R}}
\def\Si{\Sigma}
\def\Sp{\Sph^1}
\def\Sph{\mathbb{S}}
\def\St{\Si_t}

\def\Z{\mathbb{Z}}

\newcommand{\fracd}[2]{{\frac{d #1}{d #2}}}
\newcommand{\fracp}[2]{{\frac{\pa #1}{\pa #2}}}

\DeclareRobustCommand{\SkipTocEntry}[4]{}

\begin{document}

\title{A distance comparison principle for curve flows with a global forcing term}
\author{Friederike Dittberner}
\email{dittberner@math.fu-berlin.de}

\begin{abstract}
We consider closed, embedded, smooth curves in the plane and study their behaviour under curve flows with a global forcing term.
We prove an analogue to Huisken's distance comparison principle for curve shortening flow for initial curves whose local total curvature does not lie below $-\pi$ and arbitrary global forcing terms.
\end{abstract}

\maketitle
\tableofcontents

\section{Introduction}

Let $\Si_0\subset\R^2$ be a closed, embedded, smooth and positively oriented curve, parametrised by the embedding $X_0:\Sp\to\R^2$.
Let $X:\Sp\times[0,T)\to\R^2$ be a one-parameter family of maps with $X(\,\cdot\,,0)=X_0$ satisfying the evolution equation
\begin{align}\label{eq:ccf}
\fracp{X}{t}(p,t)=\big(h(t)-\kappa(p,t)\big)\nnu(p,t)
\end{align}
for $(p,t)\in\Sp\times(0,T)$, where the vector $\nnu$ is the outward pointing unit normal to the curve $\St:=X(\Sp,t)$, $\kappa$ is the curvature function and $T$ is the maximal time of existence.
The global term 
\begin{align}\label{eq:h_1}
h(t)\in[0,\infty)
\end{align}
is smooth and smoothly bounded whenever the curvature is bounded.\\

The curve shortening flow with $h\equiv0$ was first studied by Gage--Hamilton and Grayson~\cite{GageHamilton86,Grayson87}.
The enclosed area preserving curve shortening flow with $h(t)=2\pi/L_t$, where $L_t=L(\St)$ is the length of the curve, was introduced by Gage~\cite{Gage86} and the length preserving curve flow with $h(t)=\intst\kappa^2\,ds_t/2\pi$ by Pihan~\cite{Pihan98}.
In~\cite{JianPan08}, Jian and Pan consider $h(t)=L_t/2A_t$, where $A_t=A(\St)$ is the enclosed area of the curve.\\

In this paper, we prove a distance comparison principle for closed, embedded, smooth initial curves with
\begin{align*}
\int_p^q\kappa\,ds\geq-\pi
\end{align*}
for all $p,q\in\Sp$, evolving under~\eqref{eq:ccf} with global term~\eqref{eq:h_1}.
With this, we extend our result from~\cite{Dittberner21}[Thm.~5.3 and Cor.~5.4]. \\

\textbf{Acknowledgements.}
We want to thank Laiyuan Gao from Jiangsu Normal University for pointing out an error in the proof of~\cite{Dittberner21}[Thm.~5.3].
The error lies in part (i) of the proof, specifically in the equation above equation~(5.8). 
Affected are equations~(5.8) and~(5.16).
In this paper we will correct the error and improve the result of the theorem.

\section{Preliminaries}

Let $X:\Sp\to\R^2$ be a smooth, embedded curve with length element $v(p):=\big\Vert\fracd{}{p}X(p)\big\Vert$.
For a fixed point $p_0\in\Sp$, the arc length parameter is given by $s(p):=\int_{p_0}^pv(r)\,dr$, so that $ds=vdp$ and $\fracd{}{s}=\frac1{v}\fracd{}{p}$.
The map $\tilde X:=X\circ s^{-1}:\Sp_{L/2\pi}\to\R^2$ parametrises $\Si$ by arc length.
The unit tangent vector field $\btau$ to $\Si$ in direction of the arc length parametrisation is given by $\btau:=\fracd{}{s}\tilde X$ and the outward unit normal by $\nnu:=(\btau_2,-\btau_1)$.
We define the curvature by $\kappa:=-\langle\fracd{}{s}\btau,\nnu\rangle=\langle\btau,\fracd{}{s}\nnu\rangle$ and the curvature vector by $\bkappa:=-\kappa\nnu$.\\

We define the extrinsic and intrinsic distance $d,l:\Sp\times\Sp\times[0,T)\to\R$ by
$$d(p,q,t):=\Vert X(q,t)-X(p,t)\Vert_{\R^2}\quad\text{ and }\quad l(p,q,t):=\int_p^qds_t$$
and the vector $\bw:\big(\Sp\times\Sp\times[0,T)\big)\setminus\{d=0\}\to\R^2$ by
$$\bw(p,q,t):=\frac{X(q,t)-X(p,t)}{d(p,q,t)}\,.$$
Like in~\cite{Huisken95}, we define $\psi:\Sp\!\times\Sp\!\times[0,T)\to\R$ by
\begin{align}\label{eq:defpsi}
\psi(p,q,t):=\frac{L_t}\pi\sin\!\left(\frac{\pi l(p,q,t)}{L_t}\right)\,,
\end{align}
where $L_t:=L(\St)<\infty$ is the length of the curve.
Lastly, we define the total local curvature $\theta:\Sp\times\Sp\times[0,T)\to\R$ by
\begin{align}\label{eq:deftheta}
\theta(p,q.t):=\int_p^q\kappa(r,t)\,ds_t\,,
\end{align}
where we integrate in direction of the parametrisation.

\begin{Lemma}[Dittberner~{\cite{Dittberner21}[Lem.~3.2]}]\label{lem:minmaxtheta}
Let $\Si=X(\Sp)$ be an embedded, closed curve. 
Then $\sup_{\Sp\!\times\Sp}\theta=2\pi-\min_{\Sp\!\times\Sp}\theta$.
\end{Lemma}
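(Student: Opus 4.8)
The plan is to reduce the claim to the theorem of turning tangents (Hopf's Umlaufsatz) together with the behaviour of $\theta$ under interchanging its two arguments. Since $\Si=X(\Sp)$ is closed, embedded and positively oriented, the total curvature equals $\oint_{\Sp}\kappa\,ds=2\pi$. For $p\neq q$ the two arcs of $\Si$ running from $p$ to $q$ and from $q$ to $p$ in the direction of the parametrisation meet only in $p$ and $q$ and cover $\Si$ exactly once, so
\[
\theta(p,q)+\theta(q,p)=\oint_{\Sp}\kappa\,ds=2\pi,
\]
and hence $\theta(q,p)=2\pi-\theta(p,q)$.

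From this identity one immediately gets a reflection symmetry of the image of $\theta$: the swap map $\iota(p,q):=(q,p)$ is a diffeomorphism of $\Sp\times\Sp$ with $\theta\circ\iota=2\pi-\theta$, so $\theta(\Sp\times\Sp)$ is invariant under $x\mapsto2\pi-x$, and therefore $\sup_{\Sp\times\Sp}\theta=\sup_{\Sp\times\Sp}(2\pi-\theta)=2\pi-\inf_{\Sp\times\Sp}\theta$. The remaining content of the lemma is thus only that the infimum is actually a minimum (the supremum need not be attained, as the case of a convex curve shows).

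To see that the infimum is attained — the one slightly delicate point, since $\theta$ is two-valued on the diagonal $\{p=q\}$, with values $0$ and $2\pi$ — I would pass to the universal cover. Identifying $\Sp$ with $\R/L\Z$, where $L=L(\Si)$, and extending $\kappa$ periodically, set $\Theta(x,y):=\int_x^y\kappa\,ds$ on the closed strip $\{(x,y)\in\R^2:0\le y-x\le L\}$. Then $\Theta$ is smooth and invariant under the translation $(x,y)\mapsto(x+L,y+L)$, hence descends to a continuous function on a compact cylinder and attains its minimum and maximum. On the boundary faces the Umlaufsatz gives $\Theta\equiv0$ on $\{y-x=0\}$ and $\Theta\equiv2\pi$ on $\{y-x=L\}$; in particular $\min\Theta\le0<2\pi$, so the minimum of $\Theta$ is attained at a point with $y-x<L$, which represents a genuine point of $\Sp\times\Sp$. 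Since $\theta$ agrees with $\Theta$ under this identification (choosing the lift with $y-x\in[0,L)$), we get $\inf_{\Sp\times\Sp}\theta=\min_{\Sp\times\Sp}\theta$, and combining this with the displayed identity yields $\sup_{\Sp\times\Sp}\theta=2\pi-\min_{\Sp\times\Sp}\theta$.

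I expect the main obstacle to be purely this bookkeeping around the diagonal together with the non-attainment of the supremum; the universal-cover picture handles both at once, supplying compactness and resolving the ambiguity of $\theta$ on $\{p=q\}$. All the substantive input is the single fact $\oint_{\Sp}\kappa\,ds=2\pi$.
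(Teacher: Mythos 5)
Your argument is correct: the complementary-arc identity $\theta(p,q)+\theta(q,p)=2\pi$ coming from Hopf's Umlaufsatz, combined with the lift to the strip $\{0\le y-x\le L\}$ to get compactness and to resolve the diagonal ambiguity, gives exactly $\sup\theta=2\pi-\min\theta$ with the minimum attained. The paper itself only cites this lemma from~\cite{Dittberner21}, and your proof is essentially the same argument as the one behind the cited result (total curvature $2\pi$ plus the swap symmetry), so no further comparison is needed; the only cosmetic point is that the image of $\theta$ itself is not literally invariant under $x\mapsto 2\pi-x$ because of the diagonal value $0$, but your lifted function $\Theta$ repairs precisely this.
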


\begin{Thm}[Dittberner~{\cite{Dittberner21}[Thm.~3.4]}]\label{thm:dtdstheta}
Let $X:\Sp\times(0,T)\to\R^2$ be a solution of~\eqref{eq:ccf}.
Then $(\fracp{}{t}-\Delta_{\St})\theta(p,q,t)=0$ for all $p,q\in\Sp$, $p\ne q$, and $t\in(0,T)$.
Moreover, let $t_0\in(0,T)$ and suppose $\theta_{\min}(t_0)<0$. 
Then $\theta_{\min}(t_0)<\theta_{\min}(t)$ for all $t\in(t_0,T)$.
\end{Thm}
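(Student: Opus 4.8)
The plan is to establish the parabolic identity by a direct first‑variation computation, and then to deduce the strict monotonicity from the strong maximum principle.

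\textbf{The evolution equation.} I would differentiate $\theta(p,q,t)=\int_p^q\kappa\,ds_t$ in $t$ using the first‑variation formulas for~\eqref{eq:ccf}. Writing the scalar normal speed as $f:=h-\kappa$, the sign conventions fixed in the Preliminaries — via the commutator $[\pa_t,\pa_s]=-f\kappa\,\pa_s$ and the identities for $\pa_t\btau$ and $\pa_t\nnu$ — give $\pa_t(ds_t)=f\kappa\,ds_t$ and, since $h$ is constant along $\St$ (so $\pa_s^2h=0$), $\pa_t\kappa=-\pa_s^2 f-\kappa^2 f=\pa_s^2\kappa+\kappa^3-h\kappa^2$. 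Therefore
\[ \pa_t\theta(p,q,t)=\int_p^q\Big((\pa_t\kappa)+\kappa\,(h-\kappa)\kappa\Big)\,ds_t=\int_p^q\big(\pa_s^2\kappa+\kappa^3-h\kappa^2+h\kappa^2-\kappa^3\big)\,ds_t=\pa_s\kappa|_q-\pa_s\kappa|_p . \]
Viewing $\theta$ as a function of two independent points of $\St$ and letting $\Delta_{\St}$ be the Laplacian of the product metric on $\St\times\St$, one computes $\Delta^{(q)}\theta=\pa_s\kappa|_q$ and $\Delta^{(p)}\theta=-\pa_s\kappa|_p$, hence $\Delta_{\St}\theta=\pa_t\theta$. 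This proves the first claim.

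\textbf{Monotonicity of $\theta_{\min}$.} I would next record that $\theta(p,p,t)=0$ and $\theta(p,q,t)+\theta(q,p,t)=\int_{\St}\kappa\,ds_t=2\pi$ (the identity behind Lemma~\ref{lem:minmaxtheta}). Consequently, whenever $\theta_{\min}(t)<0$ the infimum is attained at an interior pair $(p,q)$, $p\ne q$, whose arclength separation $l(p,q,t)$ is bounded away from $0$ and from $L_t$ — the only places where $\theta$ is near $0$ or near $2\pi$. There $\theta$ is smooth, $\nabla\theta=0$ and $\mathrm{Hess}\,\theta\ge0$, so $\Delta_{\St}\theta\ge0$, and Hamilton's trick applied to the Lipschitz function $t\mapsto\theta_{\min}(t)$ yields $\tfrac{d}{dt}\theta_{\min}\ge(\pa_t\theta)|_{\min}=\Delta_{\St}\theta|_{\min}\ge0$ wherever $\theta_{\min}<0$; in particular $\theta_{\min}$ is non‑decreasing on $\{t:\theta_{\min}(t)<0\}$.

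\textbf{Strictness.} Suppose, for contradiction, that $\theta_{\min}(t_1)=\theta_{\min}(t_0)=:m<0$ for some $t_1\in(t_0,T)$. Then $\theta_{\min}\equiv m$ on $[t_0,t_1]$, so $u:=\theta-m\ge0$ on $K\times[t_0,t_1]$ for a fixed compact $K\subset(\Sp\times\Sp)\setminus\{p=q\}$ containing all minimizing pairs; $u$ solves the linear uniformly parabolic equation $(\pa_t-\Delta_{\St})u=0$ there and vanishes at an interior point at time $t_1$. The strong maximum principle then forces $u\equiv0$ on $K\times[t_0,t_1]$, and propagating over the connected set $(\Sp\times\Sp)\setminus\{p=q\}$ gives $\theta\equiv m$ on it; but $\theta(p,q,t)+\theta(q,p,t)=2\pi$ then forces $m=\pi>0$, a contradiction. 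Hence $\theta_{\min}(t_0)<\theta_{\min}(t)$ for every $t\in(t_0,T)$.

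\textbf{Main obstacle.} The genuinely delicate point is not the (slightly miraculous‑looking) cancellation in the evolution identity but that $\theta$ fails to be continuous on the compact torus $\Sp\times\Sp$: it jumps by $2\pi$ across the diagonal. Both the first‑order test at a minimum and the strong maximum principle must therefore be run on a compact region kept away from the diagonal, which is legitimate precisely because the standing hypothesis $\theta_{\min}<0$ forces the minimum into such a region, and one has to verify that the parabolic maximum principle really does propagate across $(\Sp\times\Sp)\setminus\{p=q\}$.
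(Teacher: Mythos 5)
Your proof is correct and follows essentially the route of the cited source (the present paper only quotes this theorem from~\cite{Dittberner21}[Thm.~3.4] without reproving it): one computes $\pa_t\theta=\pa_s\kappa|_q-\pa_s\kappa|_p=\Delta_{\St}\theta$ from the first-variation formulas and then combines Hamilton's trick with the strong maximum principle on $(\Sp\times\Sp)\setminus\{p=q\}$, using $\theta(p,q,t)+\theta(q,p,t)=2\pi$ to rule out constancy at a negative value. Your handling of the diagonal discontinuity --- confining the minimum to a compact set off the diagonal because $\theta\to0$ or $2\pi$ there while $\theta_{\min}<0$ --- is exactly the point that makes the maximum-principle argument legitimate.
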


\begin{Lemma}[Dittberner~{\cite{Dittberner21}[Lem.~3.5]}]\label{lem:thetabeta}
Let $\Si=X(\Sp)$ be an embedded curve and $p,q\in\Sp$ with $d(p,q)\ne0$.
Let $\langle\bw,\btau_p\rangle=\langle\bw,\btau_q\rangle=\cos(\beta/2)$ for $\beta\in[0,\pi]$.
Then either
\begin{enumerate}[(i)]
\item $\langle\bw,\nnu_p\rangle=-\langle\bw,\nnu_q\rangle=-\sin(\beta/2)$ and $\theta(p,q)=2\pi k+\beta$,
\item $\langle\bw,\nnu_p\rangle=-\langle\bw,\nnu_q\rangle=\sin(\beta/2)$ and $\theta(p,q)=2\pi k-\beta$, or
\item $\langle\bw,\nnu_p\rangle=\langle\bw,\nnu_q\rangle=\pm\sin(\beta/2)$ and $\theta(p,q)=2\pi k$
\end{enumerate}
for $k\in\Z$.
\end{Lemma}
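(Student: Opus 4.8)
To prove the lemma I would reduce everything to a single scalar angle function together with elementary trigonometry, with the curvature integral $\theta$ appearing as the angle swept by that function. Fix the arc of $\Si$ from $X(p)$ to $X(q)$ in the direction of the parametrisation and choose a continuous lift $\alpha$ of the tangent angle along it, so that $\btau=(\cos\alpha,\sin\alpha)$ and hence $\nnu=(\btau_2,-\btau_1)=(\sin\alpha,-\cos\alpha)$. Then $\fracd{}{s}\btau=\alpha'\,(-\sin\alpha,\cos\alpha)$, so the definition $\kappa=-\langle\fracd{}{s}\btau,\nnu\rangle$ gives $\kappa=\alpha'$ and therefore
\[
\theta(p,q)=\int_p^q\kappa\,ds=\alpha(q)-\alpha(p)
\]
as an identity of real numbers (not merely modulo $2\pi$) for this lift. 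Since $\bw$ is a single fixed unit vector, write $\bw=(\cos\gamma,\sin\gamma)$ and put $\phi_p:=\gamma-\alpha(p)$, $\phi_q:=\gamma-\alpha(q)$; the addition formulas give $\langle\bw,\btau_r\rangle=\cos\phi_r$ and $\langle\bw,\nnu_r\rangle=-\sin\phi_r$ for $r\in\{p,q\}$, and $\theta(p,q)=\phi_p-\phi_q$.

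Now I would feed in the hypothesis $\cos\phi_p=\cos\phi_q=\cos(\beta/2)$ with $\beta/2\in[0,\pi/2]$. The real solutions of $\cos\phi=\cos(\beta/2)$ split into the two families $\phi\in 2\pi\Z+\beta/2$ and $\phi\in 2\pi\Z-\beta/2$, on which $\sin\phi$ equals $+\sin(\beta/2)$ and $-\sin(\beta/2)$ respectively; hence $\langle\bw,\nnu_p\rangle,\langle\bw,\nnu_q\rangle\in\{\pm\sin(\beta/2)\}$, which is the sign part of every alternative. It then remains to distinguish cases by the two signs. If $\langle\bw,\nnu_p\rangle=\langle\bw,\nnu_q\rangle$, then $\phi_p$ and $\phi_q$ lie in the same family, so $\theta=\phi_p-\phi_q\in 2\pi\Z$ and the common value is $\pm\sin(\beta/2)$: this is (iii). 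If $\langle\bw,\nnu_p\rangle=-\sin(\beta/2)=-\langle\bw,\nnu_q\rangle$, then $\phi_p\in 2\pi\Z+\beta/2$ and $\phi_q\in 2\pi\Z-\beta/2$, so $\theta\in 2\pi\Z+\beta$: this is (i); the mirror case $\langle\bw,\nnu_p\rangle=\sin(\beta/2)=-\langle\bw,\nnu_q\rangle$ gives $\theta\in 2\pi\Z-\beta$ and is (ii). These three cases exhaust the four sign patterns, and the endpoints $\beta=0$ (all alternatives coincide, $\theta\in 2\pi\Z$) and $\beta=\pi$ ($\cos(\beta/2)=0$, $\sin(\beta/2)=1$) require no separate treatment.

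The argument is entirely elementary; the only step needing care — and thus the main, purely bookkeeping, obstacle — is to use one fixed continuous lift $\alpha$ along the chosen arc throughout, so that $\theta(p,q)$ equals the honest integral $\int_p^q\kappa\,ds$ and the $2\pi\Z$-indeterminacy is absorbed exactly once, into the integer $k$. The choice of lift can be avoided entirely: from $\fracd{}{s}\btau=-\kappa\nnu$ and $\fracd{}{s}\nnu=\kappa\btau$ the unit-vector map $s\mapsto(\langle\bw,\btau\rangle,\langle\bw,\nnu\rangle)$ solves a linear ODE with angular velocity $\kappa$, hence is rotated by exactly the angle $\theta(p,q)$ as $s$ runs from $p$ to $q$, and the same three relations drop out. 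Embeddedness of $\Si$ plays no role in this particular lemma.
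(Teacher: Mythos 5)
Your proof is correct: the lemma is quoted here without proof from \cite{Dittberner21}[Lem.~3.5], and your turning-angle argument (lift the tangent angle, note $\kappa=\alpha'$ under the paper's sign conventions so $\theta=\phi_p-\phi_q$, then sort the solutions of $\cos\phi=\cos(\beta/2)$ into the two families $2\pi\Z\pm\beta/2$) is exactly the standard route one would take, with the case bookkeeping and the degenerate endpoints $\beta\in\{0,\pi\}$ handled properly. No gaps.
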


\section{Distance comparison principle}

In the following, we study the flow~\eqref{eq:ccf} for embedded, positively oriented, smooth initial curves $\Si_0=X_0(\Sp)$ with
\begin{align}\label{eq:intkappageqminuspi}
\theta_0(p,q)=\int_p^q\kappa\,ds\geq-\pi
\end{align}
for all $p,q\in\Sp$. 
We adapt the methods from Huisken~\cite{Huisken95} and extend the results of~\cite{Dittberner21}.

\begin{Thm}
\label{thm:minimumdpsi}
Let $\Si_0=X_0(\Sp)$ be a smooth, embedded curve satisfying~\eqref{eq:intkappageqminuspi}. 
Let $X:\Sp\!\times[0,T)\to\R^2$ be a solution of~\eqref{eq:ccf} satisfying~\eqref{eq:h_1} and with initial curve $\Si_0$.
Then there exists a constant $c(\Si_0)>0$ such that
$$\inf_{(p,q,t)\in\Sp\!\times\Sp\!\times[0,T)}\frac d\psi(p,q,t)\geq c\,.$$
\end{Thm}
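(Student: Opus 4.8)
The plan is to follow Huisken's strategy from \cite{Huisken95}, adapted as in \cite{Dittberner21}, and reduce the statement to a maximum-principle argument for the ratio $d/\psi$ on the compact set $\Sp\times\Sp\times[0,t_1]$ for each fixed $t_1 < T$, then show the resulting bound is independent of $t_1$. First I would record that on the diagonal $\{p=q\}$, a Taylor expansion gives $d/\psi\to 1$, and near the diagonal $d/\psi$ stays bounded away from $0$ uniformly in $t$; so any infimum below such a threshold must be attained at an interior spatial point where $d>0$ and $p\ne q$, and either at $t=0$ or at an interior time. At $t=0$ the ratio is bounded below by a positive constant depending only on $\Si_0$ (embeddedness plus smoothness), so it suffices to rule out the infimum decreasing strictly below $\min\{c_0, 1\}$ at an interior time.

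Next I would compute the evolution of $d/\psi$. At an interior spatial minimum we have the first-order conditions $\langle\bw,\btau_q\rangle = \langle\bw,\btau_p\rangle$ and the relation between $\partial_s l$ and these quantities, which via Lemma~\ref{lem:thetabeta} forces $\theta(p,q,t)$ into one of the three listed alternatives with angle $\beta$ determined by $\langle\bw,\btau_p\rangle=\cos(\beta/2)$. The key structural input is hypothesis~\eqref{eq:intkappageqminuspi}: by Theorem~\ref{thm:dtdstheta}, $\theta_{\min}$ is nondecreasing (once negative it strictly increases), so $\theta\geq-\pi$ is preserved along the flow, and by Lemma~\ref{lem:minmaxtheta} this also gives $\theta\leq 3\pi$. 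This pins down which of the cases (i)--(iii) can occur and controls the sign of $\langle\bw,\nnu_p\rangle$, $\langle\bw,\nnu_q\rangle$ — precisely the place where the erratum says the earlier argument went wrong. The crucial point is that the global forcing term $h(t)$ enters the evolution of $d$ through $(h-\kappa)\langle\bw,\nnu\rangle$ at both endpoints, but in the combination that appears at a critical point its contribution is $h(t)\big(\langle\bw,\nnu_q\rangle-\langle\bw,\nnu_p\rangle\big)$, which vanishes in cases (i) and (ii) of Lemma~\ref{lem:thetabeta} and has a favorable sign in case (iii) because $h\geq 0$; similarly $\psi$ evolves with a term involving $\dot L_t = -\intst\kappa(\kappa-h)\,ds_t$, and here $h\geq 0$ together with the curvature bound keeps $L_t$ from degenerating. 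Assembling these, the second-derivative test at the minimum should yield $\frac{d}{dt}\big(\inf d/\psi\big)\geq 0$ at any would-be interior minimum whose value lies below the diagonal threshold.

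The main obstacle — and the subtle point flagged in the acknowledgements — is getting the sign of the $\langle\bw,\nnu\rangle$ terms correct when combining the endpoint contributions with the intrinsic-distance derivative of $\psi$. Concretely, one must carefully use~\eqref{eq:intkappageqminuspi} (propagated by Theorem~\ref{thm:dtdstheta}) to exclude case (i) of Lemma~\ref{lem:thetabeta} with the wrong value of $k$, and to ensure that in the admissible case the second-order term $\partial_s^2(d/\psi)\geq 0$ together with the curvature/normal terms has the right sign; the angle $\beta$ and the local curvature $\theta$ are linked, and the bound $\theta\in[-\pi,3\pi]$ is exactly what makes $\sin(\beta/2)$ and $\cos(\beta/2)$ cooperate. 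A secondary technical step is controlling $L_t$ from below on $[0,t_1]$: since $h\geq 0$, $\dot L_t \leq -\intst\kappa^2\,ds_t + h\intst\kappa\,ds_t$, and because $\St$ stays embedded with $\theta\geq-\pi$ the total curvature $\intst\kappa\,ds_t = 2\pi$, so one needs an upper bound on $\int\kappa^2$ or a direct isoperimetric-type argument to keep $\psi$ comparable to $l$; here I would invoke that on any finite time interval the curvature is bounded (the standing assumption on $h$) so $L_t$ cannot vanish, and the constant $c$ coming out of the argument depends only on $\Si_0$. Finally, taking $t_1\uparrow T$ and noting every estimate was $t_1$-independent gives the claimed uniform lower bound $c(\Si_0)>0$.
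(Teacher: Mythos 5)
There is a genuine gap, and it sits exactly at the point your proposal identifies as "crucial." You claim that at a critical point the forcing term enters the evolution of $d$ through $h(t)\big(\langle\bw,\nnu_q\rangle-\langle\bw,\nnu_p\rangle\big)$, that this combination \emph{vanishes} in cases (i) and (ii) of Lemma~\ref{lem:thetabeta}, and that it has a favourable sign in case (iii). This is backwards. By Lemma~\ref{lem:thetabeta}, the difference $\langle\bw,\nnu_q-\nnu_p\rangle$ vanishes precisely in case (iii) (where the two inner products are equal), while in case (i) it equals $+2\sin(\beta/2)$ and in case (ii) it equals $-2\sin(\beta/2)$. The corrected proof lives or dies on this: in case (i), with $\beta=\theta\in(0,\pi]$, the evolution of $\psi$ through $\partial_t l$ produces the term $-\frac{d}{\psi^2}\cos(\pi l/L)\,h\theta$, which is negative and, since $h$ is an arbitrary nonnegative function with no a priori bound, cannot be absorbed by the curvature terms alone. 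It is exactly the \emph{nonvanishing} positive contribution $\frac{2h}{\psi}\sin(\theta/2)$ from $\partial_t d$, combined with $\cos(\theta/2)<\cos(\pi l/L)$, $\theta/2>\pi l/L$, $d/\psi<1$, and the monotonicity and positivity of $x\mapsto\sin x - x\cos x$ on $(0,\pi]$, that beats the $-h\theta\cos(\theta/2)$ term and yields $\partial_t(d/\psi)>0$. If the $h$-terms cancelled as you assert, there would be nothing to fight against and also nothing left to fight with; the argument as you describe it does not close.

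A second, related issue: you propose a uniform "second-derivative test gives $\partial_t(\inf d/\psi)\ge 0$ at any interior minimum," but the actual argument is not uniform across the three cases. In case (ii) the $h$-contribution to $\partial_t d$ is $-2h\sin(\beta/2)\le 0$, i.e.\ genuinely unfavourable, and no sign on the time derivative is obtained there; instead one derives a direct a priori lower bound $d/\psi\ge c(\Si_0)$ from the geometry, using that $\theta\in(-\pi,0)$ is bounded away from $-\pi$ by the strict monotonicity statement in Theorem~\ref{thm:dtdstheta}. Case (iii) is excluded by showing such a configuration is not the global minimum. Only case (i) is handled by the sign of the time derivative, and the final contradiction argument uses the three cases in these three different ways. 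Finally, your appeal to "the curvature is bounded on any finite time interval (the standing assumption on $h$)" to keep $L_t$ from degenerating misreads the hypothesis: the paper assumes $h$ is bounded \emph{whenever} the curvature is, not that the curvature is bounded; the actual estimate avoids any such input by using the scale-invariant inequality $\intst\kappa^2\,ds_t>4\pi^2/L$ coming from $\intst\kappa\,ds_t=2\pi$ and Cauchy--Schwarz.
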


\begin{proof}
Except for section~(i), the proof is the same as the one for~\cite{Dittberner21}[Thm~5.3].
For the convenience of the reader, we sketch the whole proof here.\\

Let $\Si_0=X_0(\Sp)$ be an embedded closed curve satisfying~\eqref{eq:intkappageqminuspi}. 
Lemma~\ref{lem:minmaxtheta} and Theorem~\ref{thm:dtdstheta} imply that $\theta_0\in[-\pi,3\pi]$ and 
\begin{align}\label{eq:thetainteral2}
\theta(p,q,t)\in(-\pi,3\pi)
\end{align}
for all $p,q\in\Sp$ and $t\in(0,T)$. 
Since $d/\psi$ is continuous and initially positive, there exists a time $T'\in(0,T]$ so that $d/\psi>0$ on $[0,T')$.
Fix $t_0\in(0,T')$.
Assume that $\Si_{t_0}$ is not a circle so that $\min_{\Sp\!\times\Sp}(d/\psi)<1$ at $t_0$.
Let $p,q\in\Sp$, $p\ne q$, be points where a local spatial minimum of $d/\psi$ at $t_0$ is attained and assume w.l.o.g.\ that $l(p,q,t_0)\leq L/2$, where $L:=L_{t_0}$. 
Like in~\cite{Huisken95}, we get from the first spatial derivatives of $d/\psi$ that there exists $\beta\in(0,\pi]$ with
\begin{align}\label{eq:wtaucostheta}
\langle\bw,\btau_p\rangle=\langle\bw,\btau_q\rangle
=\frac d\psi\cos\!\left(\frac{\pi l}{L}\right)
=\cos\!\left(\frac\beta2\right)
\in[0,1)\,.
\end{align}
By Lemma~\ref{lem:thetabeta} and~\eqref{eq:thetainteral2} it is sufficient to consider the following three different cases.\\

(i) Assume that $\langle\bw,\nnu_p\rangle=-\langle\bw,\nnu_q\rangle=-\sin(\beta/2)$ and $\beta=\theta\in(0,\pi]$.
Then\begin{align}\label{eq:sinthetasinpilL}
\langle\bw,\nnu_q-\nnu_p\rangle
=2\sin\!\left(\frac\theta2\right)\,.
\end{align}
By~\eqref{eq:wtaucostheta},
\begin{align}\label{eq:costhetacosdpsi}
\cos\!\left(\frac\theta2\right)
=\frac d\psi\cos\!\left(\frac{\pi l}{L}\right)
<\cos\!\left(\frac{\pi l}{L}\right)\,.
\end{align}
As $\pi l/\!L\in(0,\pi/2]$ and the cosine function is axially symmetric and strictly decreasing on $(0,\pi/2]$,~\eqref{eq:costhetacosdpsi} implies $\theta/2>\pi l/L$. 
By the definition~\eqref{eq:deftheta} of $\theta$, then
\begin{align}\label{eq:thetah}
-h\int_p^q\kappa\,ds_t+\int_p^q\kappa^2\,ds_t
\geq-h\theta+\frac{\theta^2}l
>-h\theta+\frac{4\pi^2l}{L^2}.
\end{align}
Like in~\cite{Huisken95}, we get from the second spatial derivatives of $d/\psi$ and~\eqref{eq:defpsi} that
\begin{align}\label{eq:wkappasintheta}
\langle\bw,\bkappa_q-\bkappa_p\rangle
\geq-\frac{4\pi^2d}{L^2}
=-\frac{4\pi d}{\psi L}\sin\left(\frac{\pi l}{L}\right)\,.
\end{align} 
We use the evolution equation~\eqref{eq:ccf} to differentiate the ratio in time (for details see also~\cite{Dittberner21}[Lem.~2.1]) and obtain by~\eqref{eq:sinthetasinpilL},~\eqref{eq:costhetacosdpsi},~\eqref{eq:thetah} and~\eqref{eq:wkappasintheta},
\begin{align*}
&\fracp{}{t}\!\left(\frac d\psi\right) \notag\\
&\;=\frac1\psi\big(h\langle\bw,\nnu_q-\nnu_p\rangle+\langle\bw,\bkappa_q-\bkappa_p\rangle\big) 
		-\frac d{\psi^2}\cos\!\left(\frac{\pi l}{L}\right)
		\left(h\int_p^q\kappa\,ds_t-\int_p^q\kappa^2\,ds_t\right) \notag\\
	&\;\quad+\frac d{\pi\psi^2}\left(\intst\kappa^2\,ds_t-2\pi h\right)
			\left(\sin\!\left(\frac{\pi l}{L}\right)-\frac{\pi l}{L}\cos\!\left(\frac{\pi l}{L}\right)\right)\notag\\
&\;\geq\frac{2h}\psi\sin\!\left(\frac\theta2\right) 
		-\frac{4\pi d}{\psi^2 L}\sin\left(\frac{\pi l}{L}\right)
		-\frac d{\psi^2}\cos\!\left(\frac{\pi l}{L}\right)
		\left(h\theta-\frac{4\pi^2l}{L^2}\right)\notag\\
	&\;\quad+\frac{d}{\psi^2}\left(\frac1{\pi}\intst\kappa^2\,ds_t-2h\right)
			\left(\sin\!\left(\frac{\pi l}{L}\right)-\frac{\pi l}{L}\cos\!\left(\frac{\pi l}{L}\right)\right)\notag\\
&\;\geq\frac{2h}\psi\sin\!\left(\frac\theta2\right) 
		-\frac{h\theta}\psi\cos\!\left(\frac\theta2\right)\notag\\
	&\;\quad+\frac{d}{\psi^2}\left(\frac1{\pi}\intst\kappa^2\,ds_t-2h-\frac{4\pi}{L}\right)
			\left(\sin\!\left(\frac{\pi l}{L}\right)-\frac{\pi l}{L}\cos\!\left(\frac{\pi l}{L}\right)\right)\notag\\
&\;\geq\frac{2h}\psi\left(\sin\!\left(\frac\theta2\right)-\frac\theta2\cos\!\left(\frac\theta2\right)
		-\frac d\psi\left(\sin\!\left(\frac{\pi l}{L}\right)-\frac{\pi l}{L}\cos\!\left(\frac{\pi l}{L}\right)\right)\right)\notag\\
	&\;\quad+\frac{d}{\psi^2}\left(\frac1{\pi}\intst\kappa^2\,ds_t-\frac{4\pi}{L}\right)
			\left(\sin\!\left(\frac{\pi l}{L}\right)-\frac{\pi l}{L}\cos\!\left(\frac{\pi l}{L}\right)\right)
			>0\,,
\end{align*}
at $(p,q,t_0)$. 
For the last inequality, we used that $d/\psi\in(0,1)$, $\intst\kappa^2\,ds_t>4\pi^2/L$, $\theta/2>\pi l/L$ and that $\sin(x)-x\cos(x)$ is positive and strictly increasing on $(0,\pi]$.\\

(ii) Assume that $\langle\bw,\nnu_p\rangle=-\langle\bw,\nnu_q\rangle=\sin(\beta/2)$ and $-\beta=\theta\in(-\pi,0)$.
Like in~\cite{Dittberner21}[Thm~5.3], we deduce that $d/\psi\geq c(\Si_0)>0$ at $(p,q,t_0)$. \\

(iii) Assume that $\theta\in\{0\}\cup(\pi,3\pi)$ or $\theta=\pi$ and $\langle\bw,\nnu_p\rangle=-\langle\bw,\nnu_q\rangle=1$. 
Again, like in~\cite{Dittberner21}[Thm~5.3], we arrive at $d/\psi>\min_{\Sp\!\times\Sp}d/\psi(\,\cdot\,,\,\cdot\,,t_0)$,
where $\min_{\Sp\!\times\Sp}(d/\psi)(\,\cdot\,,\,\cdot\,,t_0)$ is attained at a point which was treated in cases~(i) and~(ii). \\

Assume that $d/\psi$ falls below $c$ and attains $\Lambda\in(0,c)$ for the first time at time $t_2\in(0,T)$ and points $p,q\in\Sp$, $p\ne q$, so that
\begin{align}\label{eq:c1mindpsi}
c>\Lambda=\frac d\psi(p,q,t_2)=\min_{\Sp\!\times\Sp}\frac d\psi(\,\cdot\,,\,\cdot\,,t_2)
\end{align}
is a global minimum and
\begin{align}\label{eq:dtdpsileq0}
\fracp{}{t}_{|_{t=t_2}}\!\left(\frac d\psi\right)(p,q,t)\leq0\,.
\end{align}
Case~(i) contradicts~\eqref{eq:dtdpsileq0}, and cases~(ii) and~(iii) contradict~\eqref{eq:c1mindpsi}. 

\end{proof}

\begin{Cor}
\label{cor:embeddedness}
Let $\Si_0=X_0(\Sp)$ be a smooth, embedded curve satisfying~\eqref{eq:intkappageqminuspi}.
Let $X:\Sp\!\times[0,T)\to\R^2$ be a solution of~\eqref{eq:ccf} satisfying~\eqref{eq:h_1} and with initial curve $\Si_0$.
Then $\St=X(\Sp\!,t)$ is embedded for all $t\in(0,T)$.
\end{Cor}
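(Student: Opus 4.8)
The plan is to deduce embeddedness of $\St$ directly from the uniform lower bound on $d/\psi$ established in Theorem~\ref{thm:minimumdpsi}. The key observation is that $\psi(p,q,t)$ is strictly positive whenever $p\neq q$: indeed, since $0<l(p,q,t)<L_t$ for distinct points on an embedded curve of length $L_t$, the argument $\pi l/L_t$ lies in $(0,\pi)$, so $\sin(\pi l/L_t)>0$ and hence $\psi>0$. Combined with $d/\psi\geq c>0$, this forces $d(p,q,t)\geq c\,\psi(p,q,t)>0$ for all $p\neq q$ and all $t\in[0,T)$, so $X(\cdot,t)$ is injective; smoothness of the flow then gives that $X(\cdot,t)$ is an immersion, hence $\St$ is embedded.

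The steps I would carry out are as follows. First I would fix $t\in(0,T)$ and take two distinct points $p,q\in\Sp$. Since $\Si_0$ is embedded and the flow~\eqref{eq:ccf} preserves smoothness on $[0,T)$, the length $L_t$ is finite and positive, and the intrinsic distance satisfies $0<l(p,q,t)<L_t$ (strict inequalities because $p\neq q$). Second, I would note $\psi(p,q,t)=\tfrac{L_t}{\pi}\sin(\pi l/L_t)>0$ from~\eqref{eq:defpsi}. Third, by Theorem~\ref{thm:minimumdpsi} there is $c=c(\Si_0)>0$ with $d(p,q,t)\geq c\,\psi(p,q,t)>0$, so $X(q,t)\neq X(p,t)$; as $p,q$ were arbitrary distinct points, $X(\cdot,t)$ is injective. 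Finally, since $X(\cdot,t)$ is a smooth immersion that is injective on the compact set $\Sp$, it is an embedding, so $\St$ is an embedded curve.

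One subtlety worth addressing explicitly is the definition of $l(p,q,t)$: as written, $l(p,q,t)=\int_p^q ds_t$ is an oriented integral along the parametrisation, which could a priori be taken to range in $(0,L_t)$ by choosing the representative appropriately, or one works with $\min\{l,L_t-l\}$; in either reading the relevant quantity $\sin(\pi l/L_t)$ is unchanged under $l\mapsto L_t-l$ and is strictly positive for $p\neq q$, so the argument is unaffected. I do not anticipate a genuine obstacle here — the corollary is essentially an immediate unwinding of the definitions once Theorem~\ref{thm:minimumdpsi} is in hand; the only point requiring a line of justification is that $\psi$ does not vanish away from the diagonal, which follows from embeddedness of each $\St$ being an open condition (true on an initial interval and propagated by the very estimate being used). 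A short bootstrap remark could make this self-contained: the set of times for which $\St$ is embedded is open, nonempty, and closed in $[0,T)$ by the uniform bound $d\geq c\psi$, hence equals $[0,T)$.
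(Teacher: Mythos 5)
Your proposal is correct and is exactly the argument the paper intends: the corollary is an immediate consequence of Theorem~\ref{thm:minimumdpsi}, since $\psi(p,q,t)=\tfrac{L_t}{\pi}\sin(\pi l/L_t)>0$ for $p\neq q$ forces $d(p,q,t)\geq c\,\psi(p,q,t)>0$, hence injectivity of the smooth immersion $X(\cdot,t)$ on the compact $\Sp$. Your closing bootstrap remark is harmless but unnecessary, as the theorem already supplies the uniform bound on all of $[0,T)$.
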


\providecommand{\bysame}{\leavevmode\hbox to3em{\hrulefill}\thinspace}
\providecommand{\MR}{\relax\ifhmode\unskip\space\fi MR }
\providecommand{\MRhref}[2]{%
  \href{http://www.ams.org/mathscinet-getitem?mr=#1}{#2}
}
\providecommand{\href}[2]{#2}

\bibliographystyle{amsplain} 
\end{document}